\documentclass[12pt]{amsart}

\oddsidemargin 0mm
\evensidemargin 0mm
\topmargin -10mm 
\textheight 245mm
\textwidth 160mm

\usepackage{calrsfs}
\usepackage{hhline}
\usepackage{amsfonts}
\usepackage{amsthm}
\usepackage[psamsfonts]{amssymb}
\usepackage[dvipdfmx]{graphicx}
\usepackage{graphicx}
\usepackage{ascmac}
\usepackage{amsmath}
\usepackage{fancybox}
\usepackage{fancyhdr}
\usepackage{enumerate}
\usepackage{mathrsfs}
\usepackage{color}
\usepackage{multicol}
\usepackage{framed}
\usepackage{accents}

\newtheorem{thm}{Theorem}[section]
\newtheorem{prop}[thm]{Proposition}

\newtheorem{rem}[thm]{\it Remark}

\makeatletter
\def\widebar{\accentset{{\cc@style \mskip10mu}}}
\def\wideubar{\underaccent{{\cc@style \mskip10mu}}}
\makeatother

\newtheorem*{Acknowledgements}{Acknowledgements}

\keywords{Witten-Laplacian, Eigenvalue, Lower bound, Bakry-\'{E}mery Ricci curvature}
\subjclass[2010]{Primary 58J50, 35P15. Secondary 53C21}
\address{Department of Mathematics, Graduate School of Science, Osaka University, Toyonaka, Osaka 560-0043, Japan}
\email{h-tadano@cr.math.sci.osaka-u.ac.jp}
\title{A note on lower bounds for the first eigenvalue \\ of the Witten-Laplacian}
\author{Homare TADANO}
\date{May 3, 2014.}

\pagestyle{plain}

\begin{document}

\begin{abstract}
In this note, by extending the arguments of Ling (Illinois J. Math. 51, 853-860, 2007) to Bakry-\'{E}mery geometry, we shall give lower bounds for the first nonzero eigenvalue of the Witten-Laplacian on compact Bakry-\'{E}mery manifolds in the case that the Bakry-\'{E}mery Ricci curvature has some negative lower bounds and the manifold has the symmetry that the minimum of the first eigenfunction is the negative of the maximum. Our estimate is optimal among those obtained by a self-contained method.
\end{abstract}

\maketitle

\numberwithin{equation}{section}

\section{Introduction}

A \textit{Bakry-\'{E}mery manifold} $(M, g, f)$ is a Riemannian manifold $(M, g)$ equipped with the weighted volume form $d \mu := e^{- f} d \mathrm{vol}_{g}$, where $f \in C^{2}(M)$ is a real-valued $C^{2}$-function on $M$. A Bakry-\'{E}mery manifold was introduced by Bakry-\'{E}mery \cite{Bakry-Emery} and has been received much attention in various areas of mathematics. Given a Bakry-\'{E}mery manifold $(M, g, f)$, we obtain a \textit{Bakry-\'{E}mery Ricci curvature} and a \textit{Witten-Laplacian} defined by
\begin{equation}\label{Ric-Delta}
\operatorname{Ric}_{f} := \operatorname{Ric} + \operatorname{Hess} f, \quad \Delta_{f} := \Delta - \nabla f \cdot \nabla, 
\end{equation}
respectively. Here, $\Delta = g^{ij} \nabla_{i} \nabla_{j}$. As usual, for any functions $u, v \in C_{0}^{\infty}(M)$ on $M$ with a compact support, the following integration by parts formula holds:
\[
\int_{M} \left< \nabla u, \nabla v \right> d \mu = - \int_{M} (\Delta_{f} u) v d \mu = - \int_{M} u (\Delta_{f} v) d \mu, \quad u, v \in C_{0}^{\infty}(M).
\]
Moreover, Bakry and \'{E}mery \cite{Bakry-Emery} proved that
\[
\frac{1}{2} \Delta_{f} | \nabla u |^{2} = | \operatorname{Hess} u |^{2} + \left< \nabla u, \nabla \Delta_{f} u \right> + \operatorname{Ric}_{f}(\nabla u, \nabla u), \quad u \in C_{0}^{\infty}(M).
\]
This formula can be considered as a natural extention of the Bochner-Weitzenb\"{o}ck formula. The Bakry-\'{E}mery Ricci curvature and the Witten-Laplacian are good substitutes of the Ricci curvature and the Laplacian respectively extending many fundamental theorems in Riemannian geometry to Bakry-\'{E}mery geometry, for example, Myers type theorems \cite{Qian, Lott, Wei-Wylie, Limoncu, Zhang}, Cheeger-Gromoll splitting theorems \cite{Fang-Li-Zhang}, Gradient estimates \cite{Bakry-Qian1, Wu1, Zhao}, eigenvalue estimates \cite{Bakry-Qian2, Futaki-Sano, Andrews-Ni, Futaki-Li-Li, Wu2}, Liouville type theorems \cite{Li1, Naber, Ruan, Huang-Zhang-Zhang, Brighton}. Moreover, it has been an important tool in Optimal transport theory \cite{Villani} and in Perelman's entropy formula for heat equations on complete Riemannian manifolds \cite{Li2, Li3}.

The aim of the present paper is to give lower bounds for the first nonzero eigenvalue $\lambda_{1}$ of the Witten-Laplacian $\Delta_{f}$ on compact Bakry-\'{E}mery manifolds in the case of the Bakry-\'{E}mery Ricci curvature is bounded from below by some negative constants and the manifold has the symmetry that the minimum of the first eigenfunction is the negative of the maximum. Our main result is the following:

\begin{thm}\label{Main-Theorem}
Let $(M, g, f)$ be an $n$-dimensional compact Bakry-\'{E}mery manifold. Suppose that the Bakry-\'{E}mery Ricci curvature has the lower bound
\begin{equation}\label{assumption}
\operatorname{Ric}_{f} \geqslant K
\end{equation}
for some negative constants $K < 0$ and the manifold has the symmetry that the minimum of the first eigenfunction of the Witten-Laplacian $\Delta_{f}$ is the negative of the maximum. Then the first nonzero eigenvalue $\lambda_{1}$ of the Witten-Laplacian has the lower bound
\begin{equation}\label{Main-Theorem-eq}
\lambda_{1} \geqslant \frac{\pi^{2}}{d^{2}} + \frac{1}{2}K, 
\end{equation}
where $d$ denotes the diameter of $(M, g)$.
\end{thm}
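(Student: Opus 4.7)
The plan is to follow Ling's two-step scheme: derive a pointwise gradient estimate for the normalized first eigenfunction, then integrate it along a minimizing geodesic between the points of maximum and minimum.

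Normalize so that the first eigenfunction $u$ satisfies $\max_{M} u = 1$; by the symmetry hypothesis $\min_{M} u = -1$. The target gradient estimate is
\[
|\nabla u|^{2} \leq \left(\lambda_{1}-\tfrac{K}{2}\right)\!\bigl(1-u^{2}\bigr) \quad \text{on } M.
\]
Granting this, let $p_{+}, p_{-} \in M$ realize the extrema and let $\gamma\colon[0,L]\to M$ be a minimizing geodesic from $p_{+}$ to $p_{-}$, so that $L\le d$ and $\tilde{u}(s):=u(\gamma(s))$ is continuous with $\tilde{u}(0)=1$, $\tilde{u}(L)=-1$. Since $|\tilde{u}'(s)|\leq|\nabla u|(\gamma(s))$, the estimate forces $|\tilde{u}'|/\sqrt{1-\tilde{u}^{2}}\leq\sqrt{\lambda_{1}-K/2}$; combined with the fact that $\tilde{u}$ covers $[-1,1]$ by the intermediate value theorem, a change-of-variables argument yields
\[
\pi = \int_{-1}^{1}\frac{du}{\sqrt{1-u^{2}}} \leq \int_{0}^{L}\frac{|\tilde{u}'(s)|}{\sqrt{1-\tilde{u}(s)^{2}}}\,ds \leq L\sqrt{\lambda_{1}-\tfrac{K}{2}} \leq d\sqrt{\lambda_{1}-\tfrac{K}{2}},
\]
which squares to $\lambda_{1}\geq\pi^{2}/d^{2}+K/2$.

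To produce the gradient estimate I would introduce a test function of the form $P = |\nabla u|^{2}+\phi(u)$, where $\phi$ is calibrated so that $P\leq 0$ holds automatically at the extrema of $u$ (where $\nabla u=0$), and rule out an interior maximum with $P>0$ by the maximum principle. At such a putative maximum, differentiating $P$ and invoking the Bakry-\'{E}mery Bochner identity
\[
\tfrac{1}{2}\Delta_{f}|\nabla u|^{2} = |\operatorname{Hess} u|^{2} + \langle\nabla u,\nabla\Delta_{f}u\rangle + \operatorname{Ric}_{f}(\nabla u,\nabla u),
\]
together with $\Delta_{f}u=-\lambda_{1}u$ and the hypothesis $\operatorname{Ric}_{f}\geq K$ yields a pointwise inequality; the constraint $\nabla P=0$ forces $\operatorname{Hess} u(\nabla u,\cdot)$ to be parallel to $\nabla u$, giving the Hessian lower bound $|\operatorname{Hess} u|^{2}\geq u_{11}^{2}$. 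Substituting this and choosing $\phi$ of quadratic type (with $\phi(\pm 1)=0$) so as to cancel the $|\nabla u|^{2}$ coefficient produces the desired contradiction.

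The main obstacle is the gradient estimate. In the unweighted setting one has the dimensional Hessian inequality $|\operatorname{Hess} u|^{2}\geq(\Delta u)^{2}/n$, but here the hypothesis only involves $\operatorname{Ric}_{f}$ (not an $m$-Bakry-\'{E}mery Ricci curvature), so that tool is unavailable; one must close the argument using only the crude bound $|\operatorname{Hess} u|^{2}\geq u_{11}^{2}$ coming from $\nabla P=0$, together with the Witten-Laplacian analogue of the Bochner identity. The symmetry hypothesis on $u$ is what allows $\phi$ to be taken even in $u$, and the delicate calibration of its coefficient so that the sharp constant $\lambda_{1}-K/2$ (rather than the easier but weaker $\lambda_{1}-K$) appears is precisely what makes the bound optimal among self-contained arguments.
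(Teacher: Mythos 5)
Your overall scheme (a pointwise gradient estimate for the normalized eigenfunction, then integration along a minimizing geodesic joining the two extrema) is the same family of argument as the paper's, and your integration step is essentially identical to the paper's final step. The genuine gap is the central claim $|\nabla u|^{2}\leq(\lambda_{1}-\tfrac{K}{2})(1-u^{2})$: it is exactly the hard point, and the mechanism you propose cannot deliver it. If you carry out the maximum principle argument you sketch --- $P=|\nabla u|^{2}+\phi(u)$ with $\phi$ quadratic (equivalently, the quotient $G=|\nabla u|^{2}/(c^{2}-u^{2})$), the Bakry--\'Emery Bochner formula, $\operatorname{Ric}_{f}\geq K$, and the crude bound $|\operatorname{Hess}u|^{2}\geq u_{11}^{2}$ forced by $\nabla P=0$ --- the inequality you obtain at an interior maximum is $G\leq\lambda_{1}-K\,(c^{2}-u^{2})/c^{2}\leq\lambda_{1}-K$, i.e.\ the method closes only with the constant $\lambda_{1}-K$ and hence only yields $\lambda_{1}\geq\pi^{2}/d^{2}+K$. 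You acknowledge that a ``delicate calibration'' of $\phi$ is needed to reach $\lambda_{1}-\tfrac{K}{2}$, but you give no mechanism for it, and no even quadratic $\phi$ can produce the factor $\tfrac12$ from this argument; this is precisely why Zhong--Yang/Ling-type barriers are introduced.

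The paper obtains the $\tfrac12$ by a different route, and notably \emph{not} through a pointwise bound with constant $\lambda_{1}-\tfrac{K}{2}$. Setting $t=\sin^{-1}(u/b)$ and $\delta=K/(2\lambda_{1})<0$, it compares $Z(t)=\max_{U(t)}|\nabla u|^{2}/(\lambda_{1}(b^{2}-u^{2}))$ with the $t$-dependent barrier $z(t)=1+\delta\xi(t)$, where $\xi$ is Ling's function satisfying a specific second-order ODE; Proposition \ref{prop} is the comparison step. The resulting pointwise estimate $|\nabla u|^{2}\leq\lambda_{1}z(t)(b^{2}-u^{2})$ has ``constant'' as large as $\lambda_{1}+\tfrac{K}{2}(1-\tfrac{\pi^{2}}{4})\approx\lambda_{1}+0.73|K|$ near $u=0$, i.e.\ weaker there than what you claim; the improvement to $\tfrac{K}{2}$ appears only after integrating $1/\sqrt{z(t)}$ along the geodesic and applying H\"older, since $\int_{-\pi/2}^{\pi/2}\xi\,dt=-\pi$ makes the average of $z$ equal to $1-\delta$ and hence $\lambda_{1}\geq\pi^{2}/(d^{2}(1-\delta))=\pi^{2}/d^{2}+ \tfrac{K}{2}\cdot\frac{\pi^2/d^2}{\lambda_1(1-\delta)}\,$-type bookkeeping collapses to $\lambda_{1}(1-\delta)\geq\pi^{2}/d^{2}$, i.e.\ $\lambda_{1}\geq\pi^{2}/d^{2}+\tfrac{K}{2}$. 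In short, the sharp constant lives in an averaged statement, not a pointwise one. To repair your proposal you must either prove your pointwise inequality by a genuinely different argument (it does not follow from your sketch, and is not established anywhere in the paper), or adopt the $t$-dependent Ling barrier together with the H\"older step, which is exactly what the paper does.
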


\begin{rem}\rm
If $f$ is constant in (\ref{Ric-Delta}), the Bakry-\'{E}mery Ricci curvature and the Witten-Laplacian become the ordinary Ricci curvature $\operatorname{Ric}$ and Laplacian $\Delta$, respectively. The study of the first nonzero eigenvalue for the Laplacian has a long history. See \cite{Chavel, Shi-Zhang, Lichnerowicz, Cheng, Li-Yau, Zhong-Yang, Yang, Zhao-Yang, Ling-Illinois, Ling-AGAG, Andrews-Clutterbuck} and references therein. Our estimate (\ref{Main-Theorem-eq}) can be considered as a extension of Ling \cite{Ling-Illinois} to the Witten-Laplacian via Bakry-\'{E}mery Ricci curvature.
\end{rem}


By combining Proposition 3.8 in \cite{Futaki-Sano} and above Theorem \ref{Main-Theorem}, we immediately obtain:

\begin{thm}\label{theorem}
Let $(M, g, f)$ be an $n$-dimensional compact Bakry-\'{E}mery manifold. Suppose that the Bakry-\'{E}mery Ricci curvature has the lower bound {\rm (\ref{assumption})} for some constants $K \in \mathbb{R}$ and the manifold has the symmetry that the minimum of the first eigenfunction of the Witten-Laplacian $\Delta_{f}$ is the negative of the maximum. Then the first nonzero eigenvalue $\lambda_{1}$ of the Witten-Laplacian has the lower bound {\rm (\ref{Main-Theorem-eq})}.
\end{thm}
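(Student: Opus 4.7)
My plan is a straightforward case split by the sign of $K$, since Theorem \ref{theorem} extends Theorem \ref{Main-Theorem} only by permitting non-negative values of the lower bound, and Proposition~3.8 of Futaki-Sano \cite{Futaki-Sano} is indicated as the ingredient that covers precisely this complementary range.

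First I would dispose of the case $K<0$ by a direct application of Theorem \ref{Main-Theorem}. The two sets of hypotheses --- namely \eqref{assumption} with negative $K$ and the symmetry assumption that the minimum of the first eigenfunction of $\Delta_{f}$ equals the negative of its maximum --- match verbatim those of Theorem \ref{Main-Theorem} in this range, so the desired estimate \eqref{Main-Theorem-eq} is immediate.

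Next, for $K\geq 0$ I would invoke Proposition~3.8 of \cite{Futaki-Sano}, which supplies a lower bound of exactly the form $\lambda_{1}\geq \pi^{2}/d^{2}+K/2$ in the non-negative Bakry-\'Emery Ricci regime. This is a Zhong-Yang-type estimate in Bakry-\'Emery geometry whose proof does not require $K<0$ and which operates under the same reflection-type symmetry on the first eigenfunction. Concatenating the two ranges produces \eqref{Main-Theorem-eq} for every $K\in \mathbb{R}$, without any additional computation.

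The main, and essentially only, obstacle is verifying compatibility: one must check that the symmetry hypothesis on the first eigenfunction is used in the same way in Proposition~3.8 of \cite{Futaki-Sano} as in Theorem \ref{Main-Theorem}, and that the numerical constants (in particular the factor $1/2$ in front of $K$) genuinely coincide, so that no gap opens up at the interface $K=0$. Once this is confirmed the proof is a one-sentence patching, and all of the genuine analytic work resides in Theorem \ref{Main-Theorem}, which is what extends the previously available estimate into the negative-curvature regime.
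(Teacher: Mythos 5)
Your proposal is correct and is essentially identical to the paper's own argument: the author obtains Theorem \ref{theorem} precisely by combining Theorem \ref{Main-Theorem} (covering $K<0$) with Proposition~3.8 of \cite{Futaki-Sano}, which gives $\lambda_{1}\geqslant \pi^{2}/d^{2}+K/2$ in the symmetric case for nonnegative $K$. Your compatibility check on the symmetry hypothesis and the constant $1/2$ is exactly the point that makes the patching at $K=0$ immediate.
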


\begin{rem}\rm
Under only the assumption (\ref{assumption}) for some positive constants $K > 0$, by using the method of modulus of continuity, Andrews and Ni \cite{Andrews-Ni} showed that the first nonzero eigenvalue $\lambda_{1}$ of the Witten-Laplacian $\Delta_{f}$ on compact Bakry-\'{E}mery manifolds satisfies (\ref{Main-Theorem-eq}).
\end{rem}

\begin{rem}\rm
Under only the assumption (\ref{assumption}) for some constants $K \in \mathbb{R}$, Futaki \textit{et al} \cite{Futaki-Li-Li} showed that the first nonzero eigenvalue $\lambda_{1}$ of the Witten-Laplacian $\Delta_{f}$ on compact Bakry-\'{E}mery manifolds satisfies
\begin{equation}\label{Futaki-Li-Li-eq}
\lambda_{1} \geqslant \sup_{s \in (0, 1)} \left\{ 4s(1 - s) \frac{\pi^{2}}{d^{2}} + sK \right\}  = 
\begin{cases}
0 & \textrm{if} \quad Kd^{2} < - 4 \pi^{2}, \\
(\frac{\pi}{d} + \frac{Kd}{4 \pi})^{2} & \textrm{if} \quad Kd^{2}\in [- 4 \pi^{2}, 4 \pi^{2}], \\
K & \textrm{if} \quad Kd^{2} \in (4 \pi^{2}, (n - 1) \pi^{2}].
\end{cases}
\end{equation}
At the present time, the above estimate (\ref{Futaki-Li-Li-eq}) is optimal. For $K < 0$, taking $s = \frac{1}{2}$ in the above, we recapture (\ref{Main-Theorem-eq}). Hence the lower bound (\ref{Futaki-Li-Li-eq}) is stronger than that in Theorem \ref{theorem}. An advantage of our result is being self-contained as we see below, while they use a comparison theorem between eigenvalue problems. Note that our estimate in Theorem \ref{theorem} is optimal among those obtained by a self-contained method.
\end{rem}

\begin{Acknowledgements}\rm
The author would like to thank his supervisor Professor Toshiki Mabuchi. The author also wish to thank Professor Akito Futaki for his comments. This work was partly supported by Moriyasu graduate student fellowship.
\end{Acknowledgements}

\section{Proof of Theorem \ref{Main-Theorem}}

In this section, following \cite{Futaki-Sano} we extend the arguments in \cite{Ling-Illinois} to Bakry-\'{E}mery geometry in the case that the minimum of the first eigenfunction of the Witten-Laplacian $\Delta_{f}$ is the negative of the maximum. Let $u$ be an eigenfunction of the first nonzero eigenvalue $\lambda_{1}$ for the Witten-Laplacian, i.e., $\Delta_{f} u + \lambda_{1} u = 0$. We may normalize the function $u$ such that
\begin{equation}\label{max-min}
\max_{M} u = 1, \quad \min_{M} u = - 1.
\end{equation}
Let $b > 1$ be an arbitrary constant. Define a function $Z$ on $[- \sin^{- 1}(1 / b), \sin^{- 1}(1 / b)]$ and  a constant $\delta$ by
\begin{equation}\label{Z-delta}
Z(t) := \max_{x \in U(t)} \frac{| \nabla u | ^{2}}{\lambda_{1}(b^{2} - u^{2})}, \quad \delta := \frac{K}{2 \lambda_{1}} < 0, 
\end{equation}
where $U(t) := \{ x \in M : \sin^{- 1}(u(x) / b) = t \}$. Note that $t \in [- \sin^{- 1}(1 / b), \sin^{- 1}(1 / b) ]$.

\begin{rem}\label{rem}\rm
In Proposition 3.1 and 3.4 of \cite{Futaki-Sano}, estimates for an eigenvalue $\lambda$ and a function $Z(t)$ are obtained, respectively. On the other hand, the same way does not work in our case, since $K < 0$. However, the same argument as Proposition 3.5 in \cite{Futaki-Sano} still holds in our case. Note that (\ref{max-min}) corresponds to the case of $a = c = 0$ in \cite{Futaki-Sano}.
\end{rem}

As Proposition 3.6 (b) in \cite{Futaki-Sano}, we have:

\begin{prop}\label{prop}
Suppose that the function $z : [ - \sin^{- 1}(1 / b), \sin^{- 1}(1 / b)] \rightarrow \mathbb{R}$ satisfies the following conditions:

\begin{enumerate}[{\rm (1)}]

\item $z(t) \geqslant Z(t)$ for all $t \in [ - \sin^{- 1}(1 / b), \sin^{- 1}(1 / b)]$, 

\item there exists some $x_{0} \in M$ such that $z(t_{0}) = Z(t_{0})$ at $t_{0} = \sin^{- 1}(u(x_{0}) / b)$, 

\item $z(t_{0}) \geqslant 1$, and

\item $\dot{z}(t_{0}) \sin t_{0} \leqslant 0$.

\end{enumerate}

Then we have the following:
\begin{equation}\label{ODE-prop}
z(t_{0}) \leqslant \frac{1}{2} \ddot{z}(t_{0}) \cos^{2} t_{0} - \dot{z}(t_{0}) \cos t_{0} \sin t_{0} + 1 - 2 \delta \cos^{2} t_{0}.
\end{equation}
\end{prop}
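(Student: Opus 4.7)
The plan is to apply the strong maximum principle for the Witten-Laplacian $\Delta_{f}$ to the auxiliary function
\[
\Phi(x) := z(\varphi(x)) - P(x), \qquad \varphi := \sin^{-1}(u/b), \qquad P := \frac{|\nabla u|^{2}}{\lambda_{1}(b^{2}-u^{2})}.
\]
By hypotheses (1) and (2), $\Phi \geqslant 0$ on $M$ with $\Phi(x_{0}) = 0$, so $x_{0}$ is a global minimum of $\Phi$. (Note that hypothesis (3) gives $P(x_{0}) \geqslant 1 > 0$, which forces $\nabla u(x_{0}) \neq 0$ and guarantees $\varphi$ is smooth near $x_{0}$.) The vanishing of $\nabla\Phi(x_{0})$ then reads $\nabla P(x_{0}) = \dot{z}(t_{0})\nabla\varphi(x_{0})$, while $\Delta_{f}\Phi(x_{0}) \geqslant 0$ yields the upper bound
\[
\Delta_{f}P(x_{0}) \leqslant \ddot{z}(t_{0})\,|\nabla\varphi(x_{0})|^{2} + \dot{z}(t_{0})\,\Delta_{f}\varphi(x_{0}).
\]

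I would next translate everything into $\varphi$-quantities. Since $u = b\sin\varphi$, we have $b^{2}-u^{2} = b^{2}\cos^{2}\varphi$, and the clean identity $P = |\nabla\varphi|^{2}/\lambda_{1}$ holds, so $|\nabla\varphi(x_{0})|^{2} = \lambda_{1}z(t_{0})$. Applying $\Delta_{f}$ to $u = b\sin\varphi$ and invoking $\Delta_{f}u = -\lambda_{1}u$ yields the identity $\cos\varphi\,\Delta_{f}\varphi = \sin\varphi\,(|\nabla\varphi|^{2}-\lambda_{1})$, which at $x_{0}$ gives $\Delta_{f}\varphi(x_{0}) = \lambda_{1}\tan t_{0}\,(z(t_{0})-1)$.

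For a matching lower bound on $\Delta_{f}P = \Delta_{f}|\nabla\varphi|^{2}/\lambda_{1}$, I would apply the Bakry--\'{E}mery Bochner formula to $\varphi$. The first-order condition rewrites as $\mathrm{Hess}\,\varphi\cdot\nabla\varphi = \tfrac{1}{2}\lambda_{1}\dot{z}(t_{0})\nabla\varphi$ at $x_{0}$, so $\nabla\varphi$ is an eigenvector of $\mathrm{Hess}\,\varphi$ with eigenvalue $\lambda_{1}\dot{z}(t_{0})/2$; Cauchy--Schwarz in an orthonormal frame with $e_{1}=\nabla\varphi/|\nabla\varphi|$ then forces $|\mathrm{Hess}\,\varphi|^{2} \geqslant \lambda_{1}^{2}\dot{z}(t_{0})^{2}/4$. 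The Ricci term is bounded below by $K\lambda_{1}z(t_{0})$ via (\ref{assumption}), and the cross term $\langle\nabla\varphi, \nabla\Delta_{f}\varphi\rangle$ is computed by differentiating the identity for $\Delta_{f}\varphi$ and evaluating at $x_{0}$.

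Combining the upper and lower bounds and dividing by $\lambda_{1}$, I expect to arrive at
\[
\ddot{z}(t_{0})z(t_{0}) \geqslant \tfrac{1}{2}\dot{z}(t_{0})^{2} + 2z(t_{0})(z(t_{0})-1)\sec^{2}t_{0} + \dot{z}(t_{0})(z(t_{0})+1)\tan t_{0} + 4\delta z(t_{0}),
\]
which, after dividing by $z(t_{0}) > 0$ and rearranging, differs from the equivalent reformulation of (\ref{ODE-prop}) by the residual $\tfrac{1}{2}\dot{z}(t_{0})^{2} - \dot{z}(t_{0})(z(t_{0})-1)\tan t_{0}$. The main obstacle is showing this residual is nonnegative, and this is precisely where the remaining hypotheses enter: since $\cos t_{0} > 0$ on the interval, hypothesis (4) gives $\dot{z}(t_{0})\tan t_{0} \leqslant 0$, and (3) gives $z(t_{0})-1 \geqslant 0$, so the second term of the residual is $\leqslant 0$ in absolute value and the full residual is $\geqslant 0$, completing the proof.
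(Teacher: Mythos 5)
Your proposal is correct and follows essentially the same route as the paper: the maximum-principle-plus-Bochner computation you reconstruct is exactly the argument of Futaki--Sano's Proposition 3.5 that the paper cites, and your residual $\tfrac{1}{2}\dot z(t_0)^2-\dot z(t_0)\bigl(z(t_0)-1\bigr)\tan t_0$ is, up to the positive factor $\cos^{2}t_0/\bigl(2z(t_0)\bigr)$, precisely the negative of the last term $-\frac{\dot z(t_0)}{4z(t_0)}\cos t_0\{\dot z(t_0)\cos t_0-2z(t_0)\sin t_0+2\sin t_0\}$ in the paper's displayed inequality, discarded there in the same way via hypotheses (3) and (4). I checked your combined inequality and the sign argument, so the proof is sound; only the phrase ``$\leqslant 0$ in absolute value'' should simply say that the subtracted term $\dot z(t_0)\bigl(z(t_0)-1\bigr)\tan t_0$ is nonpositive, so the residual is a sum of nonnegative terms.
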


\begin{proof}[Sketch of the Proof]
By the same arguments as Proposition 3.5 in \cite{Futaki-Sano}, we have
\[
\begin{aligned}
0 & \leqslant \frac{1}{2} \ddot{z}(t_{0}) \cos^{2} t_{0} - \dot{z}(t_{0}) \cos t_{0} \sin t_{0} - z(t_{0}) + 1 - 2 \delta \cos^{2} t_{0} \\
& \quad - \frac{\dot{z}(t_{0})}{4 z(t_{0})} \cos t_{0} \{ \dot{z}(t_{0}) \cos t_{0} - 2 z(t_{0}) \sin t_{0} + 2 \sin t_{0} \}.
\end{aligned}
\]
By the assumption (3) and (4), the last term of the above is nonpositive.
\end{proof}

Now, we give a proof of Theorem \ref{Main-Theorem}.

\begin{proof}[Proof of Theorem {\rm \ref{Main-Theorem}}]
The proof is the same as Theorem 3.1 in \cite{Ling-Illinois}. Then, we just give an outline of the proof. Let
\[
z(t) := 1 + \delta \xi(t), 
\]
where $\xi(t)$ is a function on $[- \pi / 2, \pi / 2]$ defined by
\[
\xi(t) = \frac{\cos^{2} t + 2t \sin t \cos t + t^{2} - \frac{\pi^{2}}{4}}{\cos^{2} t}
\]
and $\delta < 0$ is the negative constant as in (\ref{Z-delta}).
This function $\xi(t)$ is introduced by Ling and needed properties are studied in \cite{Ling-Illinois} and \cite{Ling-AGAG}. By using such properties, we have
\[
\begin{aligned}
& \frac{1}{2} \ddot{z}(t) \cos^{2} t - \dot{z} \cos t \sin t - z = - 1 + 2 \delta \cos^{2} t, \\
& z(t) \geqslant 1, {\ \rm and} \\
& \dot{z}(t) \sin t \leqslant 0.
\end{aligned}
\]
By using the above three conditions and (\ref{ODE-prop}), we can show that
\[
z(t) \geqslant Z(t), \quad t \in [- \sin^{- 1}(1 / b), \sin^{-1 }(1 / b)].
\]
The above implies
\begin{equation}\label{eq}
\sqrt{\lambda_{1}} \geqslant \frac{| \nabla t |}{\sqrt{z(t)}}, \quad t \in [- \sin^{- 1}(1 / b), \sin^{- 1}(1 / b)].
\end{equation}
Let $q_{1}$ and $q_{2}$ be the two points such that $u(q_{1}) = 1$ and $u(q_{2}) = -1$, respectively. Let $L$ be the minimizing geodesic between $q_{1}$ and $q_{2}$. We integrate the both sides of (\ref{eq}) along $L$ and change variable. Letting $b \rightarrow 1$, we have
\[
\begin{aligned}
d \sqrt{\lambda_{1}} & \geqslant \int_{L} \frac{| \nabla t |}{\sqrt{z(t)}} dl = \int_{- \frac{\pi}{2}}^{\frac{\pi}{2}} \frac{1}{\sqrt{z(t)}} dt \geqslant \frac{\left( \int_{- \pi / 2}^{\pi / 2} dt \right)^{3 / 2}}{\left( \int_{- \pi / 2}^{\pi / 2} z(t) dt \right)^{1 / 2}} \geqslant \left( \frac{\pi^{3}}{\int_{- \pi / 2}^{\pi / 2} z(t) dt} \right)^{1 / 2}.
\end{aligned}
\]
By the definition of $z(t)$ and the properties of $\xi(t)$, we have
\[
\lambda_{1} \geqslant \frac{\pi^{3}}{d^{2} \int_{- \pi / 2}^{\pi / 2} z(t) dt} = \frac{\pi^{2}}{d^{2}(1 - \delta)} \quad {\rm and} \quad \lambda_{1} \geqslant \frac{\pi^{2}}{d^{2}} + \frac{1}{2}K.
\]
The proof of Theorem \ref{Main-Theorem} is completed.
\end{proof}

\begin{rem}\rm
In \cite{Futaki-Sano}, under only the assumption (\ref{assumption}) for some constants $K > 0$, it is proved that the first nonzero eigenvalue $\lambda_{1}$ of the Witten-Laplacian $\Delta_{f}$ on compact Bakry-\'{E}mery manifolds has the lower bound $\lambda_{1} \geqslant \frac{\pi^{2}}{d^{2}} + \frac{31}{100}K$. However, in the case of $K < 0$ same argument as in \cite{Futaki-Sano} does not hold, since there is no substitute for the Lichnerowicz type estimate. See the Case (B-a-2) in \cite{Futaki-Sano} and the Case (II-a) in \cite{Ling-AGAG}.
\end{rem}

\end{document}